\tikzset{every loop/.style={min distance=10mm,looseness=10}}
\tikzset{every state/.style={minimum size=2mm}}
\newtheorem{theorem}{Theorem}
\newtheorem{example}{Example}
\newtheorem{lemma}[theorem]{Lemma}
\newtheorem{problem}{Problem}
\newtheorem{remark}[theorem]{Remark}
\newtheorem{Fact}[theorem]{Observation}
\title{On semi-transitive orientability of split graphs}
\author{Sergey Kitaev\footnote{Department of Mathematics and Statistics, University of Strathclyde, 26 Richmond Street, Glasgow G1, 1XH, United Kingdom. 
{\bf Email:} sergey.kitaev@strath.ac.uk.}\ \ and Artem Pyatkin\footnote{Sobolev Institute of Mathematics, Koptyug ave, 4, Novosibirsk, 630090, Russia}\ \footnote{Novosibirsk State University, Pirogova str. 2, Novosibirsk, 630090, Russia. {\bf Email:} artem@math.nsc.ru.}}
\begin{document}

\maketitle

\begin{abstract}
A directed graph is semi-transitive if and only if it is acyclic and for any directed path $u_1\rightarrow u_2\rightarrow \cdots \rightarrow u_t$, $t \geq 2$, either there is no edge from $u_1$ to $u_t$ or all edges $u_i\rightarrow u_j$ exist for $1 \leq i < j \leq t$. Recognizing semi-transitive orientability of a graph is an NP-complete problem. 

A split graph is a graph in which the vertices can be partitioned into a clique and an independent set. Semi-transitive orientability of spit graphs was recently studied in the literature. The main result in this paper is proving that recognition of semi-transitive orientability of split graphs can be done in a polynomial time. We also characterize, in terms of minimal forbidden induced subgraphs, semi-transitively orientable split graphs with the size of the independent set at most 3, hence extending the known classification of such graphs with the size of the clique at most 5. \\

\noindent
{\bf Keywords:}  semi-transitive orientation, split graph, polynomial solvability, circular ones property
\end{abstract}

\section{Introduction}
A directed graph is semi-transitive if and only if it is acyclic and for any directed path $u_1\rightarrow u_2\rightarrow \cdots \rightarrow u_t$, $t \geq 2$, either there is no edge from $u_1$ to $u_t$ or all edges $u_i\rightarrow u_j$ exist for $1 \leq i < j \leq t$. 
An undirected graph is {\em semi-transitive} if it admits a semi-transitive orientation. 
The notion of a semi-transitive orientation was introduced in \cite{HKP16} to characterize {\em word-representable graphs} that were introduced in \cite{KP08}. These graphs generalize several important and well-studied classes of graphs; see a survey \cite{KP18} or a book \cite{KL15} for more details.

A graph $G=(V,E)$ is called a {\it split graph} if its vertex set $V$ can be partitioned into two parts $I\cup C$ such that $C$ induces a clique, and $I$ induces an independent set \cite{FH77}. We assume that a clique in a split graph is maximum possible, i.e.\ none of the vertices from $I$ is adjacent to all vertices of $C$. The study of split graphs attracted much attention in the literature (e.g.\ see \cite{CJKS2020} and references therein). Some split graphs are semi-transitive, others are not. Semi-transitive orientability (equivalently, word-representability) of split graphs was recently studied in the papers \cite{CKS21,I2021,IK2021,KLMW17}. Interestingly, split graphs were instrumental in \cite{CKS21} to solve a 10 year old open problem in the theory of word-representable graphs.

In general, recognizing if a given graph is semi-transitive is an NP-complete problem \cite{KL15}. The main result in this paper is showing that for the class of split graphs this problem is  polynomially solvable. Thus, our main focus is the following problem. 

\begin{problem}~\label{P1}
Given a split graph $G$, is it semi-transitive?
\end{problem}

Problem~\ref{P1} was studied in \cite{CKS21,I2021,KLMW17}. In particular, a characterization of semi-transitive split graphs in terms of minimal forbidden induced subgraphs for $|C|\le 5$ follows from \cite{CKS21,KLMW17}. Moreover, in \cite{KLMW17} the following structural property of semi-transitive split graphs was proved, where $N(v)$ denotes the neighbourhood of a vertex $v$. Clearly, $N(v)\subset C$ for each $v\in I$. Also, for any two integers $a\le b$, we let $[a,b]=\{a,a+1,\ldots,b\}$.

\begin{theorem}[\cite{KLMW17}]~\label{t1}
A split graph $G$ is semi-transitive if and only if the vertices of $C$ can be labeled from $1$ to $k=|C|$ in such a way that:
\begin{itemize}
\item[{\em (1)}] For each $v\in I$, either $N(v)=[a,b]$ for $a\le b$ or $N(v)=[1,a]\cup [b,k]$ for $a< b$.
\item[{\em (2)}] If $N(u)=[a_1,b_1]$ and $N(v)=[1,a_2]\cup [b_2,k]$, for $a_1\le b_1,\ a_2<b_2$, then $a_1>a_2$ or $b_1<b_2$.
\item[{\em (3)}] If $N(u)=[1,a_1]\cup [b_1,k]$ and $N(v)=[1,a_2]\cup [b_2,k]$, for $a_1<b_1$ and $a_2<b_2$, then $a_2<b_1$ and $a_1<b_2$.
\end{itemize}
\end{theorem}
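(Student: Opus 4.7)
I would prove the two directions separately, letting the linear order on $C$ induced by a transitive tournament orientation play the role of the labeling $1,\dots,k$. For the ``only if'' direction, fix a semi-transitive orientation of $G$; since $C$ is a clique, its orientation is a transitive tournament, and this gives the labeling $c_1,\dots,c_k$. For each $v\in I$ I would establish condition~(1) in three steps: (i)~acyclicity of the triangle $\{c_i,c_j,v\}$ forces every in-neighbor of $v$ to have a smaller label than every out-neighbor; (ii)~applying the semi-transitive shortcut condition to the path $c_i\to c_{i+1}\to\cdots\to c_j\to v$ with shortcut $c_i\to v$ shows that $N_{in}(v)$ is an interval in the labeling, and symmetrically for $N_{out}(v)$; (iii)~if $v$ has both in- and out-neighbors but with a gap between them, the path $c_1\to\cdots\to c_p\to v\to c_q\to\cdots\to c_k$ (with $c_p$ the last in-neighbor and $c_q$ the first out-neighbor) carries the clique shortcut $c_1\to c_k$, whose semi-transitive closure forces $c_1\in N_{in}(v)$ and $c_k\in N_{out}(v)$.

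Conditions~(2) and~(3) would be proved by contradiction: in any violating configuration I would construct a short directed path through $u$, $v$ and appropriately chosen clique vertices whose shortcut edge lies in $C$ (hence exists) but whose semi-transitive closure demands a missing edge --- typically the forbidden edge $uv$ between two $I$-vertices, or an edge from $u$ or $v$ to a known non-neighbor in $C$. The principal obstacle is the sub-case in~(2) when the pivot of $u$ (the index separating its in- and out-neighbors in $C$) lies inside the gap $(a_2,b_2)$ of $v$; there I would invoke the maximality of $C$ to produce some $c_{a_1-1}$ or $c_{b_1+1}$ outside $N(u)$, and then exhibit a length-three path whose shortcut forces a non-edge of $u$.

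For the ``if'' direction I would construct an explicit orientation: orient $C$ as the transitive tournament $c_i\to c_j$ for $i<j$; for each $v\in I$ with $N(v)=[a,b]$ make $v$ a sink by orienting every incident edge into $v$; for each $v$ with $N(v)=[1,a]\cup[b,k]$ orient $[1,a]$ into $v$ and $v$ into $[b,k]$. Acyclicity follows from a topological sort, and condition~(3) is exactly what is needed to place the co-interval vertices consistently. For semi-transitivity, suppose $u_1\to\cdots\to u_t$ has shortcut $u_1\to u_t$ and a missing intermediate edge $u_\ell\to u_m$; acyclicity rules out the reverse edge $u_m\to u_\ell$, so $u_\ell$ and $u_m$ are non-adjacent. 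Condition~(3) excludes two co-interval $I$-vertices on a single path (their interior positions would force contradictory inequalities on the flanking clique indices), while the sink property of interval-type $I$-vertices confines them to the terminal position $u_t$. The remaining case, an $I$-vertex and a clique non-neighbor of it on the same path, is ruled out by conditions~(1) and~(2), which control where on the path the clique-neighbors of a given $I$-vertex can sit; the main work is this final enumeration.
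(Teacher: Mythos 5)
The paper does not actually prove Theorem~\ref{t1}: it is imported by citation as a reformulation of Theorem~15 of \cite{KLMW17}, so there is no in-paper argument to compare against. What you have written is a correct, self-contained proof of the imported result, and it is essentially the natural direct argument one would expect \cite{KLMW17} to use. Your ``only if'' direction checks out: an acyclic orientation of the clique is automatically a transitive tournament (worth stating explicitly), the triangle argument separates in- from out-neighbours, the two path-plus-shortcut arguments make $N_{in}(v)$ and $N_{out}(v)$ intervals, and the wrap-around path $c_1\to\cdots\to c_p\to v\to c_q\to\cdots\to c_k$ with clique shortcut $c_1\to c_k$ forces the form $[1,a]\cup[b,k]$. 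For (3), the path $c_1\to u\to c_{b_1}\to v\to c_k$ (under the violation $a_2\ge b_1$) with shortcut $c_1\to c_k$ forces the non-edge $uv$, and for (2) the three subcases by the position of the pivot $p$ of $u$ all close: when $p<a_2$ use $u\to c_{a_2}\to v\to c_{b_2}$ with shortcut $u\to c_{b_2}$; when $p\ge b_2$ use $c_{a_2}\to v\to c_{b_2}\to u$ with shortcut $c_{a_2}\to u$; and in the genuinely delicate case $a_2\le p<b_2$, where no directed path passes through both $u$ and $v$, your appeal to the maximality of $C$ works exactly as advertised, e.g.\ via $c_{a_2}\to u\to c_{b_2}\to c_{b_1+1}$ with clique shortcut $c_{a_2}\to c_{b_1+1}$ forcing the missing edge $uc_{b_1+1}$ (symmetrically with $c_{a_1-1}$ when $b_1=k$). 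The ``if'' direction's case enumeration is also sound. One small correction: acyclicity of your constructed orientation holds for \emph{any} labeling satisfying (1) alone --- a topological order is obtained by inserting each co-interval vertex between $c_a$ and $c_{a+1}$ and appending the sinks --- so condition (3) is not ``what is needed to place the co-interval vertices''; it is needed only later, to exclude two co-interval vertices from lying on a common directed path. You should also dispose of the degenerate case $N(v)=\emptyset$ (or note it is excluded by convention), and record explicitly that a ``missing'' required edge $u_\ell\to u_m$ cannot be the reversed edge by acyclicity, which you do. With these details filled in, the proof is complete.
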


Note that Theorem~\ref{t1} is a convenient, for our purposes, reformulation of Theorem~15 in \cite{KLMW17}. Indeed, once a proper labeling of a split graph is done, to create a semi-transitive orientation of the graph, there is a unique way to  orient edges in $C$ and between $v\in I$ and $N(v)=[1,a]\cup [b,k]$ (if there are any such vertices), and there are two choices to orient edges between  $v\in I$ and $N(v)=[a,b]$ (if there are any such vertices), namely, $v$ can be a source or a sink.

Determining semi-transitivity of split graphs is closely related to the well-known circular ones property of $(0,1)$-matrices. A $(0,1)$-matrix has the {\it consecutive ones} property (for columns) if after some permutation of its rows in all columns the ones are consecutive. A $(0,1)$-matrix has the {\it circular ones} property (for columns) if after some permutation of its rows in all columns either ones or zeroes are consecutive. Note that if zeroes are consecutive then ones are  ``almost consecutive'' in the sense that they are allowed to wrap around from the bottom of a column to its top. 

Note that from the algorithmic point of view, searching for permutations of rows giving a consecutive ones property and giving a circular ones property
is equivalent, as follows from the following lemma:


\begin{lemma}[\cite{T71}]~\label{l1}
Let $M$ be a $(0,1)$-matrix. Denote by $M_1$ the matrix obtained from $M$ by the inversion (changing $0$s by $1$s and vice versa) of all columns having $1$ in the first row. Then $M$ has the circular ones property if and only if $M_1$ has the consecutive ones property.
\end{lemma}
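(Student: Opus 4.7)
The plan is to prove both implications constructively, showing how a row permutation witnessing one property yields a row permutation witnessing the other. The key structural fact I would rely on is that the circular ones property is invariant under cyclic rotations of the rows: if some permutation $\sigma$ of the rows of $M$ makes every column have either consecutive ones or consecutive zeros, then so does any cyclic shift of $\sigma$, since cyclic shifts preserve the arc structure on a cycle. In particular, we may without loss of generality assume that $\sigma$ sends row $1$ of $M$ to position $1$.

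For the forward direction, I would take such a normalized $\sigma$ and consider $N := \sigma(M)$ together with $N' := \sigma(M_1)$. Since $\sigma(1)=1$, the top entry $N[1,c]$ equals $M[1,c]$ for every column $c$. The analysis splits into two cases. If $M[1,c]=0$, then column $c$ is untouched in passing from $M$ to $M_1$, so $N'$ agrees with $N$ on this column; and a circular-ones column with a $0$ at the top necessarily has its ones lying in a single consecutive (non-wrapping) interval. If $M[1,c]=1$, then column $c$ is bitwise inverted in $M_1$; a circular-ones column of $N$ with a $1$ at the top necessarily has its zeros forming a consecutive interval, and after inversion these positions become precisely the ones of $N'$'s column. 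In both cases $N'$ has consecutive ones in column $c$, so $\sigma$ witnesses the consecutive ones property for $M_1$.

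For the converse, let $\tau$ be any row permutation making $\tau(M_1)$ have consecutive ones. Applying the same $\tau$ to $M$, each column $c$ of $\tau(M)$ either equals the corresponding column of $\tau(M_1)$ (when $M[1,c]=0$) or is its bitwise complement (when $M[1,c]=1$). In the former case the ones of $\tau(M)$ in column $c$ are consecutive; in the latter case the zeros are consecutive. Both patterns are permitted by the circular ones property, so $\tau$ witnesses the circular ones property for $M$.

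The only non-routine point is the normalization in the forward direction, that is, the reduction to $\sigma(1)=1$. This reduces to the clean observation that shifting all rows cyclically preserves the ``consecutive ones or consecutive zeros'' condition column by column. Once this is in place, the rest of the argument is a straightforward case split on the value of $M[1,c]$, and I would not expect any further obstacles.
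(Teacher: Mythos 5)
The paper does not actually prove this lemma: it is imported verbatim from Tucker's 1971 paper with only a citation, so there is no in-paper argument to compare yours against. Your proof is correct and self-contained. The normalization step is the right idea and is valid: the per-column condition ``ones consecutive or zeros consecutive'' is preserved under cyclic rotation of the row order, so a witnessing permutation for the circular ones property of $M$ can be rotated to place row $1$ at the top. The ensuing case split is sound --- a circular-ones column with $0$ in the top position must have its ones in a non-wrapping interval, and one with $1$ in the top position must have its zeros in a non-wrapping interval --- and the converse direction needs no normalization at all, since both ``ones consecutive'' and ``zeros consecutive'' are admissible for the circular ones property. No gaps.
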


As for the consecutive ones property, the first polynomial algorithm for determining whether a $(0,1)$-matrix has it was suggested in \cite{FG65}. In \cite{BL76} a more general algorithm based on the concept of PQ-trees was presented (in particular, it allows to find all possible permutations of rows providing consecutive ones orderings for all columns); it solves the decision problem in time $\mathcal{O}(m+n+f)$ where $m,n,$ and $f$ are the number of rows, columns and ones in $M$, respectively. Moreover, it was shown there that the problem of finding out whether a $(0,1)$-matrix has a circular ones property can be solved in the same time $\mathcal{O}(m+n+f)$. In \cite{HM03} an algorithm finding all possible permutations providing circular ones orderings for all columns in linear time 
is suggested. In \cite{CLMNSSS} a linear time algorithm for checking the isomorphism of any two $(0,1)$-matrices having circular one property was constructed; this solves the graph isomorphism problem in linear time for several graph classes.

Given a split graph $G$ with $C=\{ u_1,\ldots,u_k\}$ and $I=\{v_1,\ldots,v_t\}$, consider a $(0,1)$-matrix $M(G)$ with $k$ rows and $t$ columns where $m_{ij}=1$ if and only if $u_i$ is adjacent to $v_j$. Then, clearly, any labelling of the vertices in $C$  defines a permutation of the rows of the matrix $M(G)$. Moreover, such a labelling satisfies condition (1) of Theorem~\ref{t1} if and only if the corresponding permutation provides a circular ones property. The first attempts of translating conditions (2)--(3) of Theorem~\ref{t1} into the matrices language were made in \cite{I2021,IK2021}, but the condition stated there was erroneous. The correct statement is as follows: 

\begin{theorem}~\label{t1m}
A split graph $G$ is semi-transitive if and only if the rows of matrix $M(G)$ can be permuted in such a way that:
\begin{itemize}
\item[{\em (i)}] The ordering has a circular ones property for all columns.
\item[{\em (ii)}] If a column has the form $1^a0^b1^c$ where $a+b+c=k$ and $a,b,c\ge 1$ then no other column may contain ones in all positions from $a$ to $a+b+1$.
\end{itemize}
\end{theorem}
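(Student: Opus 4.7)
The plan is to recast Theorem~\ref{t1} into matrix language, translating each of its three conditions and verifying that conditions (i) and (ii) of Theorem~\ref{t1m} capture them exactly. A labeling of $C$ by $1,\dots,k$ is the same data as a permutation of the rows of $M(G)$, and under this correspondence each column of $M(G)$ is the indicator vector of $N(v)$ for some $v\in I$. Hence it suffices to show, for an arbitrary row permutation, that conditions (1)--(3) of Theorem~\ref{t1} hold if and only if (i) and (ii) of Theorem~\ref{t1m} hold.

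First I would show that (1) is equivalent to (i). Condition (1) says each $N(v)$ is either an interval $[a,b]$ or the union $[1,a]\cup[b,k]$; these translate into columns of the forms $0^{a-1}1^{b-a+1}0^{k-b}$ and $1^{a}0^{b-a-1}1^{k-b+1}$ respectively, i.e., columns in which either the $1$s or the $0$s are consecutive. By definition this is exactly the circular ones property.

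Assuming (i) now holds, I would prove that (2) and (3) together are equivalent to (ii). Fix a wrap-form column $c_v$ of shape $1^{a}0^{b}1^{c}$ with $a,b,c\ge 1$, so in the notation of Theorem~\ref{t1} one has $N(v)=[1,a_v]\cup[b_v,k]$ with $a_v=a$ and $b_v=a+b+1$; the phrase ``positions $a$ through $a+b+1$'' is exactly the interval $[a_v,b_v]$. For any other column $c_u$ I would split into two cases. In the consecutive case $N(u)=[a_u,b_u]$, the column $c_u$ carries $1$s throughout $[a_v,b_v]$ iff $a_u\le a_v$ and $b_u\ge b_v$, which is precisely the negation of condition (2) for the pair $(u,v)$. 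In the wrap case $N(u)=[1,a_u]\cup[b_u,k]$, note that the $0$-positions of $c_u$ form the single nonempty interval $[a_u+1,b_u-1]$ (nonempty because $N(u)\neq C$), so the contiguous interval $[a_v,b_v]$ lies in the $1$-support of $c_u$ iff it is entirely inside one of the two $1$-blocks, i.e., iff $b_v\le a_u$ or $a_v\ge b_u$; this is precisely the negation of condition (3) for the pair $(u,v)$.

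Since condition (2) only constrains pairs with one consecutive and one wrap column, and condition (3) only constrains pairs of two wrap columns, letting $v$ range over all wrap-form columns and $u$ over all other columns in (ii) exhausts exactly the pairs governed by (2) and (3); combining this with the equivalence (1)$\Leftrightarrow$(i) and invoking Theorem~\ref{t1} then yields Theorem~\ref{t1m}. The main delicate point will be the wrap--wrap case: one must carefully argue that the contiguous block $[a_v,b_v]$ cannot straddle the $0$-block of $c_u$ and that the resulting disjunction $b_v\le a_u\ \lor\ a_v\ge b_u$ matches the symmetric negation of (3), keeping the two sets of parameters $(a_u,b_u)$ and $(a_v,b_v)$ correctly aligned with the roles of $u$ and $v$ throughout.
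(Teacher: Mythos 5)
Your proposal is correct and follows the same route as the paper: Theorem~\ref{t1m} is obtained by translating the labeling of $C$ into a row permutation of $M(G)$ and checking that (i) matches condition (1) and (ii) matches conditions (2)--(3) of Theorem~\ref{t1}. The paper merely asserts this equivalence as ``easy to verify,'' whereas you supply the verification in full (correctly, including the interval--wrap and wrap--wrap cases), so there is no substantive difference in approach.
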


It is easy to verify that property (ii) of Theorem~\ref{t1m} is equivalent to conditions (2)--(3) of Theorem~\ref{t1}. 

So, a natural attempt to solve Problem~\ref{P1} would be as follows: consider all permutations of rows providing circular ones ordering using the result from \cite{HM03} and check whether they satisfy property (ii) of Theorem~\ref{t1m}. Unfortunately, this approach does not provide a polynomial-time algorithm for Problem~\ref{P1}. Indeed, although the result from \cite{HM03} allows to find each permutation with circular ones property in linear time, the total number of such permutations may be exponential, while the number of permutations satisfying  property (ii) of Theorem~\ref{t1m} among them could be small.

\begin{example} Consider the following matrix $M$ with $k$ rows and $k+2$ columns.

\begin{equation*}
M = \left(
\begin{array}{ccccccc}
1 & 0 & \ldots & 0 & 0 & 1 & 0\\
0 & 1 & \ldots & 0 & 0 & 1 & 1\\
\vdots & \vdots &\ddots &\vdots & \vdots & \vdots & \vdots\\
0 & 0 & \ldots & 1 & 0 & 1 & 1\\
0 & 0 & \ldots & 0 & 1 & 0 & 1
\end{array}
\right)
\end{equation*}
Since each column contains either one $1$, or one $0$, every permutation of its rows has circular ones property, i.e. there are $k!$ such permutations. On the other hand, only those permutations where $0$'s in the last two columns are in consecutive rows satisfy property (ii) of Theorem~\ref{t1m} (rows $1$ and $k$ are also considered as consecutive here); hence, there are only $2k$ such permutations. \end{example}

In order to apply the circular ones permutations techniques for solving Problem~\ref{P1} we need a deeper study of the structure of semi-transitive split graphs provided in the following section.

The paper is organized as follows. In Section~\ref{sec2} we prove that Problem~\ref{P1} is equivalent to an auxiliary problem of a subsets bijection into a cycle, and show that this problem can be reduced to a known polynomially solvable problem of determining $(0,1)$-matrices with the circular ones property. 
In Section~\ref{sec3} we use a derived by us equivalence to characterize, in terms of minimal forbidden induced subgraphs, all semi-transitive split graphs with $|I|\le 3$.

\section{Polynomial solvability of Problem~\ref{P1}}\label{sec2}
Denote by $V(C_k)$ the vertex set of a cycle graph $C_k$. 
Consider the following auxiliary problem.

\begin{problem}~\label{P2}
Given a set $A$ of cardinality $k$ and its proper subsets $A_1,\ldots,A_t\subset A$, is there a bijection $F:A\longrightarrow V(C_k)$ such that
for all $i,j\in [1,t]$ the subgraph induced by $F(A_i\cap A_j)$ is connected (the empty subgraph is considered to be connected)?
\end{problem}

Note that in Problem~\ref{P2}, the subgraphs induced by each $F(A_i)$ must be connected (this corresponds to the case when $j=i$). The following theorem shows the equivalence of Problems~\ref{P1} and~\ref{P2}.

\begin{theorem}~\label{t2}
Problem~\ref{P1} is polynomially solvable if and only if Problem~\ref{P2} is polynomially solvable.
\end{theorem}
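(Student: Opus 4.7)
The plan is to establish polynomial-time reductions in both directions via the natural bijection between a split graph $G$ with parts $I=\{v_1,\ldots,v_t\}$ and $C=\{u_1,\ldots,u_k\}$, and a Problem~\ref{P2} instance given by $A=C$ and $A_i=N(v_i)$ for $i\in[1,t]$. Under this correspondence, a labelling of $C$ by $[1,k]$ is exactly a bijection $F\colon A\to V(C_k)$, where the vertices of $C_k$ are identified with $[1,k]$ and adjacency is cyclic (so $k$ and $1$ are neighbours). The reductions themselves are purely syntactic and run in linear time, so it suffices to verify that $G$ is semi-transitive iff the associated Problem~\ref{P2} instance is feasible.

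For the reduction $(\ref{P1})\Rightarrow(\ref{P2})$, I would start from a split graph $G$ and form $(A;A_1,\ldots,A_t)$ as above; the assumption that no vertex of $I$ is adjacent to all of $C$ ensures $A_i\subsetneq A$, so the instance is valid. Using Theorem~\ref{t1}, I would then show that a labelling satisfies condition~(1) of Theorem~\ref{t1} iff each $F(A_i)$ induces a connected subgraph of $C_k$, since a subset of $V(C_k)$ is a single arc precisely when it is either of the form $[a,b]$ (non-wrapping) or $[1,a]\cup[b,k]$ (wrapping across the edge $\{k,1\}$). Similarly, conditions~(2) and~(3) are equivalent to: for every pair $i\neq j$, the set $F(A_i\cap A_j)=F(A_i)\cap F(A_j)$ is again a single arc (or empty). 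Combining these gives exactly the requirement of Problem~\ref{P2}.

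For the reverse reduction $(\ref{P2})\Rightarrow(\ref{P1})$, I would take an instance $(A;A_1,\ldots,A_t)$ and build $G$ with clique $C=A$, independent set $I=\{v_1,\ldots,v_t\}$, and $v_iu\in E$ iff $u\in A_i$. Properness of the $A_i$ again guarantees that no vertex of $I$ is adjacent to all of $C$, so $G$ is a split graph in the paper's sense. The same equivalence as above shows that any positive answer to Problem~\ref{P2} for the instance corresponds to a labelling satisfying (1)--(3) of Theorem~\ref{t1}, and conversely. Hence a polynomial-time algorithm for either problem yields one for the other.

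The main technical point is the case analysis translating conditions~(2)--(3) into "every pairwise intersection is a single arc of $C_k$". Two arcs of $C_k$ can intersect in zero, one, or two sub-arcs, and the last case is the obstruction to be ruled out. If one arc is $[a_1,b_1]$ (non-wrapping) and the other is $[1,a_2]\cup[b_2,k]$ (wrapping), the intersection decomposes into two separate arcs $[a_1,a_2]$ and $[b_2,b_1]$ precisely when $a_1\le a_2$ and $b_1\ge b_2$, which is exactly the negation of condition~(2). If both arcs are wrapping, say $[1,a_i]\cup[b_i,k]$ for $i=1,2$, then the intersection contains the wrap-around arc $[b_{\max},k]\cup[1,a_{\min}]$, and it is disconnected iff there is an additional middle component; a short verification shows this happens exactly when $a_2\ge b_1$ or $a_1\ge b_2$, the negation of~(3). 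The non-wrapping/non-wrapping case is automatic since the intersection of two intervals is an interval. Once this arc-intersection dictionary is in place, the equivalence of the two problems and hence Theorem~\ref{t2} follow.
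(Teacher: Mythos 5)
Your proposal is correct and follows essentially the same route as the paper: the same correspondence $A=C$, $A_i=N(v_i)$ in both directions, and the same case analysis showing that conditions (1)--(3) of Theorem~\ref{t1} are equivalent to every $F(A_i\cap A_j)$ inducing a connected arc of $C_k$. Your arc-intersection dictionary matches the paper's verification case for case.
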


\begin{proof}
Assume that Problem~\ref{P2} is polynomially solvable. Consider an arbitrary instance of Problem~\ref{P1} with the graph $G=(I\cup C, E)$ where $I=\{v_1,\ldots,v_t\},\ C=\{u_1,\ldots,u_k\}$. Construct a corresponding instance of Problem~\ref{P2} as follows. Let $A=C$ and put $A_i=N(v_i)$ for all $i=1,\ldots, t$. Let us prove the following claim.\\[-3mm]

\noindent
{\bf Claim.} Graph $G$ has a semi-transitive orientation if and only if the set $A$ has an appropriate bijection.\\[-3mm]

Indeed, if $G$ satisfies the conditions of Theorem~\ref{t1} then let the labeling of the vertices in $C$ define the bijection $F$ (the order of the vertices in the cycle graph $C_k$). Then (1) ensures that each subgraph induced by $F(A_i)$ is connected since it is a path from $a$ to $b$ that either goes through $1$ or not. Consider arbitrary $v_i,v_j\in C$. If $N(v_i)=[a_1,b_1]$ and $N(v_j)=[a_2,b_2]$ then $F(A_i\cap A_j)$ induces either an empty subgraph or a path from $\max\{ a_1,a_2\}$ to $\min\{ b_1,b_2\}$; anyway, it is connected. Let $N(v_i)=[a_1,b_1]$ and $N(v_j)=[1,a_2]\cup[b_2,k]$. By (2), either $a_1>a_2$ or $b_1<b_2$ or both. Then, respectively, $F(A_i\cap A_j)$ induces either a path from $b_2$ to $b_1$, or a path from  $a_1$ to $a_2$, or the empty subgraph. Finally, let 
$N(v_i)=[1,a_1]\cup[b_1,k]$ and $N(v_j)=[1,a_2]\cup[b_2,k]$. Then by (3), $a_2<b_1$ and $a_1<b_2$. Hence, $F(A_i\cap A_j)$ induces a path $b,b+1,\ldots ,k,1,2,\ldots, a$  where $a=\min\{ a_1,a_2\}$ and $b=\max\{b_1,b_2 \}$. So, the subgraph induced by $F(A_i\cap A_j)$ is connected in $C_k$ for all $i,j\in [1,t]$.

Assume now that there is a bijection from $A$ into $C_k$ such that all intersections of the subsets induce connected subgraphs. Label $C$ in the order of the cycle graph $C_k$ starting from an arbitrary vertex. Since each subgraph induced by $F(A_i)$ is connected, the corresponding $N(v_i)$ is either an interval $[a,b]$ or the union of two intervals $[1,a]\cup [b,k]$ for some $a<b$, i.e. (1) takes place. If $N(v_i)=[a_1,b_1]$ and $N(v_j)=[1,a_2]\cup [b_2,k]$ but $a_1\le a_2$ and $b_1\ge b_2$ then $F(A_i\cap A_j)$ induces a union of two paths $a_1,\ldots ,a_2$ and $b_1,\ldots, b_2$ which is disconnected, a contradiction. So, (2) holds. Assume that 
$N(v_i)=[1,a_1]\cup[b_1,k]$ and $N(v_j)=[1,a_2]\cup[b_2,k]$. If $a_2\ge b_1$ then $F(A_i\cap A_j)$ induces the union of three paths $1,\ldots ,a_1;\ b_1,\ldots, a_2;$ and $b_2\ldots, k$,  while if
$a_1\ge b_2$ then $F(A_i\cap A_j)$ induces the union of paths $1,\ldots, a_2;\ b_2,\ldots, a_1;$ and $b_1,\ldots, k$. In both cases, the subgraph is clearly disconnected; because of the contradiction, (3) must be true.

By the claim, applying an algorithm solving the constructed instance of Problem~\ref{P2} gives a solution of the initial instance of Problem~\ref{P1}. 

If Problem~\ref{P1} is polynomially solvable then the reduction from an instance of  Problem~\ref{P2} to  Problem~\ref{P1} is made in a similar way: put $C=A, I=\{v_1,\ldots,v_t\},$ and $N(v_i)=A_i$. Then the same claim provides the polynomial solvability of Problem~\ref{P2}.
\end{proof}

In order to solve  Problem~\ref{P2}, we use the above-mentioned results \cite{BL76,HM03} on the polynomial solvability of checking the circular ones property of $(0,1)$-matrices.  Clearly, $\mathcal{O}(m+n+f)$ can be bounded from above by $\mathcal{O}(mn)$, which is the input size of the consecutive ones problem in general. Note that a split graph is defined by the neighbourhood sets of the vertices in $I$. Hence, the size of the input of Problem~\ref{P1} is $\mathcal{O}(tk)$. We can now prove the main result of the paper. 

\begin{theorem}~\label{t3}
Problem~\ref{P1} can be solved in time $\mathcal{O}(t^2k)$.
\end{theorem}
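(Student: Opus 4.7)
The plan is to combine Theorem~\ref{t2} with the linear-time circular ones algorithm of \cite{BL76}, applied not to $M(G)$ itself but to an auxiliary matrix that also records all pairwise intersections of the neighbourhoods $N(v_i)$.

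First I would translate the input into the instance of Problem~\ref{P2} from the proof of Theorem~\ref{t2}: take $A=C$ and $A_i=N(v_i)$ for $i=1,\ldots,t$. The key observation is that a subset $S\subseteq V(C_k)$ induces a connected subgraph of the cycle $C_k$ if and only if, in the ordering given by the cycle, the characteristic vector of $S$ is a circular arc — equivalently, it satisfies the circular ones condition (with the empty set and the full set trivially admitted). Consequently, the bijection $F$ sought in Problem~\ref{P2} exists if and only if the rows of the $k\times\binom{t+1}{2}$ matrix $M'$ whose columns are the characteristic vectors of the sets $A_i\cap A_j$ for $1\le i\le j\le t$ admit a permutation making \emph{every} column circular ones; the diagonal columns $i=j$ handle the connectedness of each $F(A_i)$ individually.

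The second step is routine bookkeeping. Using bit-vector representations, each intersection $A_i\cap A_j$ is computed in $\mathcal{O}(k)$ time, so $M'$ is built in $\mathcal{O}(t^2 k)$ time and has $m=k$ rows, $n=\mathcal{O}(t^2)$ columns, and $f=\mathcal{O}(t^2 k)$ ones. Running the circular ones algorithm of \cite{BL76} on $M'$ then takes time $\mathcal{O}(m+n+f)=\mathcal{O}(t^2 k)$, and either returns a witnessing row permutation — which, by Theorem~\ref{t2} together with the remark following Theorem~\ref{t1}, yields a semi-transitive orientation of $G$ — or reports that none exists, in which case $G$ is not semi-transitive.

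The point I expect to require the most care is justifying that augmenting $M(G)$ with \emph{all} pairwise intersections is both necessary and sufficient. Necessity is because two circular arcs on $C_k$ can intersect in two disjoint arcs, so the circular ones property of the columns of $M(G)$ alone does not imply connectedness of all $F(A_i\cap A_j)$ — this is exactly the gap exposed by the example preceding Section~\ref{sec2} and by the erroneous reformulations in \cite{I2021,IK2021}. Sufficiency is immediate from the arc characterization of connectedness in $C_k$ combined with Theorem~\ref{t2}. Once this equivalence is in place, the claimed $\mathcal{O}(t^2 k)$ bound is just the sum of the construction cost of $M'$ and the circular ones algorithm applied to it.
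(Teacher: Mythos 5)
Your proposal is correct and is essentially identical to the paper's own proof: both reduce to Problem~\ref{P2} via Theorem~\ref{t2}, build the $k\times\binom{t+1}{2}$ matrix whose columns are the characteristic vectors of all intersections $A_i\cap A_j$ (diagonal included), and invoke the circular ones algorithm of \cite{BL76} for the $\mathcal{O}(t^2k)$ bound. Your accounting of the construction cost of the auxiliary matrix is in fact slightly more explicit than the paper's.
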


\begin{proof} Given an instance of Problem~\ref{P1}, first construct an equivalent instance of Problem~\ref{P2} as shown in the proof of Theorem~\ref{t2}. Clearly, it takes time $\mathcal{O}(tk)$. Let $A=\{a_1,\ldots,a_k\}$.  Put $m=k$ and $n=(t^2+t)/2$.  Construct the $(0,1)$-matrix $M$ of size $m\times n$ as follows. 
For convenience, let the columns be indexed by the pairs $(i,j)$ where $i,j\in [1,t]$ and $i\le j$. Let each row of $M$ correspond to an element of the set $A$ and each column $(i,j)$ be a characteristic vector of the subset $A_i\cap A_j$ (i.e. $m_{s,(i,j)}=1$ if and only if $a_s \in A_i\cap A_j$). Then, clearly, finding the appropriate bijection $F$ is equivalent to finding a permutation of the rows that provides a circular ones ordering for all columns of the matrix $M$. By the result from \cite{BL76} mentioned above, the latter can be done in time  $\mathcal{O}(mn)=\mathcal{O}(t^2k)$. 
\end{proof}

\begin{remark} Note that in practice many columns can contain no, or just one, $1$; such columns can be omitted since they have the circular ones property after any permutation of rows. This can make the algorithm faster, although in the worst case, we have the bound of $\mathcal{O}(t^2k)$. \end{remark}

\section{Split graphs with small independent set}\label{sec3}
In \cite{CKS21,KLMW17} a characterization, in terms of minimal forbidden induced subgraphs, of all semi-transitive split graphs with $|C|\le 5$ was found. It follows easily from Theorem~\ref{t1} that all split graphs with $|I|\le 2$ are semi-transitive. In this section, we provide a complete characterization, in terms of minimal forbidden induced subgraphs, of semi-transitive split graphs with $|I|=3$. We start with the following easy observation that has been frequently used in the literature~\cite{KL15}.

\begin{Fact}~\label{o1}
Given an arbitrary graph $G=(V,E)$, let $a,b\in V$ satisfy $N(a)\setminus \{b\} = N(b)\setminus \{a\}$. Then $G$ is semi-transitive if and only if $G\setminus b$ is semi-transitive.
\end{Fact}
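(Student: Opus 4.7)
The plan is: the forward direction is immediate from heredity of semi-transitivity under induced subgraphs. Indeed, a directed path in $G\setminus b$ is also a directed path in $G$, and both the candidate shortcut edge and the candidate missing edge survive in $G$, so any violation downstairs would lift to one upstairs. For the non-trivial direction, given a semi-transitive orientation $\sigma$ of $G\setminus b$, I would extend $\sigma$ to an orientation $\sigma'$ of $G$ by \emph{copying $a$'s incidences at $b$}: for each common neighbour $v\in N(a)\setminus\{b\}=N(b)\setminus\{a\}$, orient $bv$ in $\sigma'$ in the same direction as $av$ in $\sigma$; if $ab$ is an edge, orient it $a\to b$ (either choice works, by symmetry of the hypothesis).

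For acyclicity of $\sigma'$, any directed cycle avoiding $b$ already lies in $\sigma$ and contradicts its acyclicity. A directed cycle through $b$, after replacing each occurrence of $b$ by $a$ (the two neighbours of $b$ on the cycle are common neighbours of $a$ and $b$, and the required orientations at $a$ match by construction), becomes a closed directed walk in $\sigma$ on $V\setminus\{b\}$; any such walk contains a directed cycle in $\sigma$, contradiction.

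For shortcut-freeness, suppose $u_1\to\cdots\to u_t$ is a putative shortcut in $\sigma'$ with edge $u_1\to u_t$ and some missing edge $u_iu_j$. If $b$ does not appear on the path, the configuration already lives in $\sigma$ and gives a contradiction. The crux is when both $a=u_r$ and $b=u_s$ appear on the path: if $r<s$ and $s-r\geq 2$, then $u_{r+1}$ and $u_{s-1}$ are common neighbours of $a$ and $b$, so $a\to u_{r+1}$ and $u_{s-1}\to a$ both exist in $\sigma$, producing the directed cycle $a\to u_{r+1}\to\cdots\to u_{s-1}\to a$ in $\sigma$ and contradicting acyclicity. In every remaining configuration---only one of $a,b$ on the path, or $a,b$ consecutive on the path---I would delete $b$ from the sequence and let $a$ take over its role; the twin hypothesis $N(a)\setminus\{b\}=N(b)\setminus\{a\}$ guarantees that the new sequence is a directed path in $\sigma$ with the correct endpoint edge $u_1\to u_t$ (even when $b\in\{u_1,u_t\}$, because the other endpoint is then a common neighbour of $a,b$ or coincides with $a$) and with the missing edge preserved (notably, a missing edge $u_ib$ forces $u_i\notin N(b)=N(a)\cup\{b\}\setminus\{a\}$, so $u_ia$ is missing as well). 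This exhibits a shortcut in $\sigma$, a contradiction.

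The main obstacle is the two-vertex case of the shortcut analysis, where both $a$ and $b$ lie on the hypothetical bad path. This is exactly where the twin property has to interact both with the acyclicity of $\sigma$ (to eliminate the non-consecutive sub-case) and with the edge-copying step (to reduce the consecutive sub-case to a strictly smaller shortcut in $\sigma$). Everything else is routine bookkeeping with the identity $N(a)\setminus\{b\}=N(b)\setminus\{a\}$.
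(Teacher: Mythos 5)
Your proposal uses exactly the same construction as the paper: the forward direction is heredity, and for the converse you copy the orientation of each edge $ax$ onto $bx$ (orienting $ab$ arbitrarily if present), which is precisely the paper's one-line argument. The only difference is that the paper asserts the resulting orientation is ``clearly'' semi-transitive, whereas you carry out the acyclicity and shortcut-freeness verification in detail; your case analysis is correct.
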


Indeed, having a semi-transitive orientation of $G\setminus b$, direct each edge $bx$ (for $x\ne a$) in the same way as $ax$; if the edge $ab$ exists, direct it in an arbitrary way. Clearly, the obtained orientation of $G$ must be semi-transitive.

Let $G$ be a split graph with $I=\{ a,b,c\}$. Then, by Observation~\ref{o1} we can assume that for each $S\subseteq I$ there is at most one $v_S\in C$ whose neighbourhood in $I$ coincides with $S$. So, we can assume that $|C|\le 8$.

\begin{theorem}~\label{t4}
Let $G$ be a split graph with $I=\{ a,b,c\}$. Then $G$ is not semi-transitive if and only if
\begin{itemize}
\item[{\em (a)}] $C$ contains $v_\emptyset, v_{\{a,b\}}, v_{\{a,c\}}, v_{\{b,c\}}$; or

\item[{\em (b)}] $C$ contains $v_{\{a,b,c\}}, v_{\{a,b\}}, v_{\{a,c\}}, v_{\{b,c\}}$; or

\item[{\em (c)}] $C$ contains $v_{\{a,b,c\}}, v_{\{a\}}, v_{\{b\}}, v_{\{c\}}$.
\end{itemize}

\noindent
That is, $G$ is not semi-transitive if and only if it contains one of the graphs in Figure~\ref{forbidden-subgraphs} as an induced subgraph.
\end{theorem}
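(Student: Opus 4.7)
The plan is to exploit the equivalence of Problem~\ref{P1} with Problem~\ref{P2} provided by Theorem~\ref{t2}. After applying Observation~\ref{o1}, we may assume $|V_S|\le 1$ for every $S\subseteq I$, where $V_S=\{u\in C\colon N(u)\cap I=S\}$, so $|C|\le 8$; write $\mathcal{S}=\{S\colon V_S\neq\emptyset\}$. Semi-transitivity of $G$ is then equivalent to the existence of a bijection $F\colon C\to V(C_{|\mathcal{S}|})$ such that each $F(A_x)$ and each $F(A_x\cap A_y)$ (for $x,y\in\{a,b,c\}$) induces a connected subgraph of $C_{|\mathcal{S}|}$, where $A_x=N(x)\subseteq C$.

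For the ``only if'' direction I would check directly that each of the three minimal configurations admits no valid cyclic ordering of its $4$-vertex clique. In (a), the vertices $v_{\{a,b\}},v_{\{a,c\}},v_{\{b,c\}}$ would have to be pairwise cyclically adjacent in $C_4$, producing a triangle --- impossible. In (b) and (c), the vertex $v_{\{a,b,c\}}$ would have to be cyclically adjacent to three distinct other vertices of $C_4$, exceeding its maximum degree $2$. Since semi-transitivity is hereditary under induced subgraphs, any split graph containing one of these configurations as an induced subgraph is non-semi-transitive.

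For the ``if'' direction, assume none of (a), (b), (c) holds. A short case-check by which single subset is absent shows that $|\mathcal{S}|\le 6$: if any subset of $2^I$ is removed, the remaining seven always include one of the three forbidden quadruples. The cases $|\mathcal{S}|\le 3$ are trivial, since every subset of $V(C_k)$ for $k\le 3$ induces a connected subgraph. For $|\mathcal{S}|\in\{4,5,6\}$ I would proceed by case analysis organized by which of the ``extreme'' sets $\emptyset$ and $\{a,b,c\}$ lie in $\mathcal{S}$, further reduced modulo the $S_3$-action permuting $\{a,b,c\}$; this leaves only a handful of orbits to inspect (for $|\mathcal{S}|=6$, four orbits representing the thirteen valid families). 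The cyclic template
\[
\{a\},\,\{a,b\},\,\{b\},\,\{b,c\},\,\{c\},\,\{a,c\}
\]
already handles the subcase $\emptyset,\{a,b,c\}\notin\mathcal{S}$ after restriction to $\mathcal{S}$; in the remaining subcases, an explicit ordering can be exhibited by hand in each orbit, placing $v_\emptyset$ at a gap of the template lying outside all three arcs (whose existence corresponds to the absence of (a)) and placing $v_{\{a,b,c\}}$ at a position compatible with both the single-arc and pair-arc constraints (whose feasibility corresponds to the absence of (b) and (c)).

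The main obstacle is the bookkeeping in the sufficiency case analysis: each individual case is routine once a candidate ordering is written down, but one must uniformly verify that a valid placement of $v_\emptyset$ and $v_{\{a,b,c\}}$ exists for every $\mathcal{S}$ avoiding (a), (b), (c), and conversely that these three configurations are precisely the obstructions --- this pairing of obstruction with failed insertion is the heart of the minimality claim.
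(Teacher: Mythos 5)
Your proposal follows essentially the same route as the paper: reduce to Problem~\ref{P2} via Theorem~\ref{t2}, show that each of the three configurations admits no valid cyclic arrangement, and exhibit explicit cyclic orderings when all three are absent. Your necessity arguments are correct and in fact slightly cleaner than the paper's (the paper argues case-by-case from the neighbours of $F(v_\emptyset)$ or $F(v_{\{a,b,c\}})$, whereas your triangle-in-$C_4$ and degree-$3$-in-$C_4$ observations dispatch (a)--(c) uniformly), and your counting argument that any $7$-element family contains a forbidden quadruple, hence $|\mathcal{S}|\le 6$, is a nice explicit addition the paper leaves implicit. The one soft spot is the sufficiency recipe: inserting $v_{\{a,b,c\}}$ into the fixed template $v_{\{a\}},v_{\{a,b\}},v_{\{b\}},v_{\{b,c\}},v_{\{c\}},v_{\{a,c\}}$ does not always work. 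For example, with $\mathcal{S}=\{\emptyset,\{b\},\{c\},\{a,c\},\{b,c\},\{a,b,c\}\}$ (which avoids (a)--(c)), the connectivity of $F(A_a\cap A_c)$ and $F(A_b\cap A_c)$ forces $v_{\{a,b,c\}}$ to be adjacent to both $v_{\{a,c\}}$ and $v_{\{b,c\}}$, but these two are separated by $v_{\{c\}}$ on one side and $v_{\{b\}}$ on the other in the restricted template, so no single insertion point suffices; one must also transpose $v_{\{c\}}$ and $v_{\{a,c\}}$, which is exactly what the paper's explicit ordering $v_{\{c\}}<v_{\{a,c\}}<v_{\{a,b,c\}}<v_{\{b,c\}}<v_{\{b\}}<v_\emptyset$ does. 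So the case analysis you outline does close, but the orderings in the $v_{\{a,b,c\}}$ cases must be written out as genuine reorderings (as in the paper) rather than obtained by insertion into the template.
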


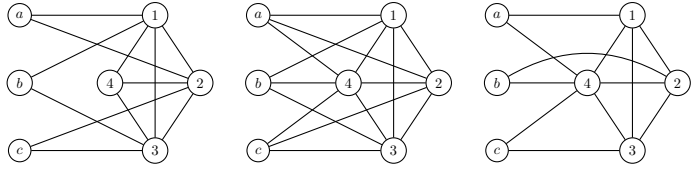
\begin{figure}
\begin{tabular}{c c c}
\hspace{1cm}
\begin{tikzpicture}[scale=0.3]

\draw (6,6) node [scale=0.5, circle, draw](node1){1};
\draw (8,3) node [scale=0.5, circle, draw](node2){2};
\draw (6,0) node [scale=0.5, circle, draw](node3){3};
\draw (4,3) node [scale=0.5, circle, draw](node4){4};

\draw (0,6) node [scale=0.5, circle, draw](node5){$a$};
\draw (0,3) node [scale=0.5, circle, draw](node6){$b$};
\draw (0,0)node [scale=0.5, circle, draw](node7){$c$};

\draw (node1)--(node2)--(node3)--(node4)--(node1);
\draw (node1)--(node3);
\draw (node2)--(node4);

\draw (node1)--(node5);
\draw (node1)--(node6);
\draw (node3)--(node6);
\draw (node3)--(node7);
\draw (node2)--(node5);
\draw (node2)--(node7);

\end{tikzpicture}

& 

\begin{tikzpicture}[scale=0.3]

\draw (6,6) node [scale=0.5, circle, draw](node1){1};
\draw (8,3) node [scale=0.5, circle, draw](node2){2};
\draw (6,0) node [scale=0.5, circle, draw](node3){3};
\draw (4,3) node [scale=0.5, circle, draw](node4){4};

\draw (0,6) node [scale=0.5, circle, draw](node5){$a$};
\draw (0,3) node [scale=0.5, circle, draw](node6){$b$};
\draw (0,0)node [scale=0.5, circle, draw](node7){$c$};

\draw (node1)--(node2)--(node3)--(node4)--(node1);
\draw (node1)--(node3);
\draw (node2)--(node4);

\draw (node1)--(node5);
\draw (node1)--(node6);
\draw (node3)--(node6);
\draw (node3)--(node7);
\draw (node2)--(node5);
\draw (node2)--(node7);
\draw (node4)--(node7);
\draw (node4)--(node5);
\draw (node4)--(node6);

\end{tikzpicture}

&

\begin{tikzpicture}[scale=0.3]

\draw (6,6) node [scale=0.5, circle, draw](node1){1};
\draw (8,3) node [scale=0.5, circle, draw](node2){2};
\draw (6,0) node [scale=0.5, circle, draw](node3){3};
\draw (4,3) node [scale=0.5, circle, draw](node4){4};

\draw (0,6) node [scale=0.5, circle, draw](node5){$a$};
\draw (0,3) node [scale=0.5, circle, draw](node6){$b$};
\draw (0,0)node [scale=0.5, circle, draw](node7){$c$};

\draw (node1)--(node2)--(node3)--(node4)--(node1);
\draw (node1)--(node3);
\draw (node2)--(node4);

\draw (node1)--(node5);
\draw (node3)--(node7);
\draw [bend right=30] (node2) to (node6);
\draw (node4)--(node7);
\draw (node4)--(node5);
\draw (node4)--(node6);

\end{tikzpicture}

\end{tabular}

\caption{Minimal forbidden induced subgraphs for semi-transitive orientability of split graphs with $|I|=3$. Note that these are three out of the four minimal forbidden induced subgraphs used in characterisation of semi-transitive split graphs with $|C|=4$ in \cite{KLMW17}.}\label{forbidden-subgraphs}
\end{figure}

\begin{proof} In each of the cases (a)--(c) we use the reduction to an equivalent instance of Problem~\ref{P2} used in the proof of Theorem~\ref{t2}, and prove the non-existence of a required bijection $F$. \\[-3mm]

\noindent
(a) Since the semi-transitivity property is hereditary (when removing vertices), it is sufficient to consider the case of $C=\{v_\emptyset, v_{\{a,b\}}, v_{\{a,c\}}, v_{\{b,c\}} \}$. Then, in an equivalent instance of Problem~\ref{P2} we have $A_a=\{ v_{\{a,b\}}, v_{\{a,c\}}\}$, $A_b=\{ v_{\{a,b\}}, v_{\{b,c\}}\}$ and $A_c=\{ v_{\{a,c\}}, v_{\{b,c\}}\}.$  Assume that the required bijection from $A$ into the cycle graph $C_4$ exists. Then, the vertex $F(v_\emptyset)$ is adjacent, say, to the vertices $F(v_{\{a,b\}})$ and $F(v_{\{a,c\}})$. But then the subgraph induced by $F(A_a)$ is disconnected. Two other possible choices for the neighbours of $F(v_\emptyset)$ are similar. \\[-3mm]

\noindent
(b) Let $C=\{ v_{\{a,b,c\}}, v_{\{a,b\}}, v_{\{a,c\}}, v_{\{b,c\}} \}$. Here $A_a=\{ v_{\{a,b\}}, v_{\{a,c\}}, v_{\{a,b,c\}}\}$, $A_b=\{ v_{\{a,b\}}, v_{\{b,c\}}, v_{\{a,b,c\}}\}$ and $A_c=\{ v_{\{a,c\}}, v_{\{b,c\}}, v_{\{a,b,c\}}\}.$ Due to the symmetry, we may assume that $F(v_{\{a,b,c\}})$ is adjacent to $F(v_{\{a,b\}})$ and $F(v_{\{a,c\}})$. In this case, the subgraph induced by $F(A_b\cap A_c)=\{ v_{\{b,c\}}, v_{\{a,b,c\}}\}$ is disconnected. 
\\[-3mm]

\noindent
(c) Let $C=\{ v_{\{a,b,c\}}, v_{\{a\}}, v_{\{b\}}, v_{\{c\}}\}$. Then $A_a=\{ v_{\{a\}}, v_{\{a,b,c\}}\},\ A_b=\{ v_{\{b\}}, v_{\{a,b,c\}}\}$ and $A_c=\{ v_{\{c\}}, v_{\{a,b,c\}}\}.$ Again, let 
$F(v_{\{a,b,c\}})$ be adjacent to $F(v_{\{a\}})$ and $F(v_{\{b\}})$. Then, clearly, the subgraph induced by $F(A_c)$ is disconnected. \\[-3mm]

Now assume that none of (a)--(c) takes place. If none of $v_{\{a,b,c\}}, v_\emptyset$ is in $C$ then label the remaining vertices in the order 
$$v_{\{a\}} < v_{\{a,b\}} < v_{\{b\}} < v_{\{b,c\}} < v_{\{c\}} < v_{\{a,c\}}.$$ It is easy to see that the conditions of Theorem~\ref{t1} hold. Assume that $C$ does not contain $v_{\{a,b,c\}}$ but contains $v_\emptyset$. Since (a) is not true, $C$ does not contain, say, $v_{\{a,b\}}$ (two other possibilities are similar). Then label $C$ in the order $$v_{\{a\}} < v_{\{a,c\}} < v_{\{c\}} < v_{\{b,c\}} < v_{\{b\}} < v_\emptyset.$$ Again, by Theroem~\ref{t1}, the semi-transitive orientation exists. Now, assume that $v_{\{a,b,c\}}$ is in $C$. We can further assume that $v_{\{a,b\}}$ is not in $C$ due to (b), and also that one of 
$v_{\{a\}}, v_{\{b\}}, v_{\{c\}}$ is not in $C$ due to (c). If $v_{\{a\}}\not\in C$ then label $C$ in the order  $$v_{\{c\}}< v_{\{a,c\}}< v_{\{a,b,c\}}< v_{\{b,c\}}< v_{\{b\}}< v_\emptyset.$$ The case $v_{\{b\}}\not\in C$ is similar. If  $v_{\{c\}}\not\in C$ then let the labeling of $C$ be in the order $$v_{\{a\}}< v_{\{a,c\}}< v_{\{c,b,c\}}< v_{\{b,c\}}< v_{\{b\}}< v_\emptyset.$$ In either case, the conditions of Theorem~\ref{t1} are satisfied.
\end{proof}

\section{Open problems}

We would like to conclude our paper with the following open problems.

\begin{itemize}
\item[1.] Does the problem of recognition of semi-transitivity remain polynomially solvable for {\em chordal graphs} (a natural superclass of split graphs)? A chordal graph is a graph in which all cycles of four or more vertices have a chord, which is an edge that is not part of the cycle but connects two vertices of the cycle.
\item[2.] Characterize, in terms of minimal forbidden subgraphs or by other means, semi-transitive split graphs with small $|I|\geq4$.  
\item[3.] The characterization of semi-transitivity of split graphs in the case of $|I|=3$ involves three forbidden subgraphs, while such characterization in the cases of $|C|=4$ and $|C|=5$ involves four and nine forbidden subgraphs, respectively (see \cite{CKS21,KLMW17}). Is there any relation between the number of forbidden subgraphs, say, for a fixed $|I|$ and $|C|=|I|+1$? 
\end{itemize}

\section*{Aknowledgements} The work of the second author was partially supported by the program of fundamental scientific researches of
the SB RAS, project 0314-2019-0014.

\end{document}